\documentclass[10pt]{amsart}
\usepackage{geometry} 
\usepackage{amssymb}
\geometry{a4paper} 
\usepackage{amscd}
\usepackage{amsfonts}
\usepackage{amsmath}
\usepackage{amssymb}
\usepackage[american, english]{babel}
\usepackage{bbm}
\usepackage{bookmark}
\usepackage{cmap}
\usepackage{dsfont}
\usepackage{enumerate}
\usepackage{epigraph}
\usepackage{euscript}
\usepackage[myheadings]{fullpage}
\usepackage{graphicx}
\usepackage{mathabx}
\usepackage{mathrsfs}
\usepackage{mathtools}
\usepackage{refcount}
\usepackage{stmaryrd}
\usepackage{thinsp}
\usepackage[safe]{tipa}
\usepackage{verbatim}
\usepackage{xr-hyper}
\usepackage{hyperref}
\usepackage[all]{xy}
\usepackage{url}
\usepackage{geometry} % see geometry.pdf on how to lay out the page. There'ss lots.
\usepackage{hyperref}
\usepackage{amsmath}
\geometry{a4paper} % or letter or a5paper or ... etc
% \geometry{landscape} % rotated page geometry

% See the ``Article customise'' template for come common customisations

\newtheorem{defi}{Definition}[subsection]
\newtheorem*{defi*}{Definition}
\newtheorem{rmk}[defi]{Remark}%[subsection]
\newtheorem*{rmk*}{Remark}
%[subsection]
\newtheorem*{prop*}{Proposition}
\newtheorem{thm}[defi]{Theorem}%[subsection]
\newtheorem*{thm*}{Theorem}
\newtheorem{cor}[defi]{Corollary}%[subsection]
\newtheorem*{lmm*}{Lemma}
\newtheorem{lmm}[defi]{Lemma}%[subsection]
%[subsection]

\DeclareSymbolFont{largesymbols}{OMX}{yhex}{m}{n}
\DeclareMathAccent{\wideparen}{\mathord}{largesymbols}{"F3}

\title{On the D-module of an isolated singularity}
\author{Thomas Bitoun}
%\address{}
\email{Thomas.Bitoun@ucalgary.ca}
%\date{} % delete this line to display the current date

%%% BEGIN DOCUMENT
\begin{document}

\newtheorem{MainThm}{Theorem} 
\renewcommand{\theMainThm}{\Alph{MainThm}}. 

\maketitle

\begin{abstract} 

Let $Z$ be the germ of a complex hypersurface isolated singularity of equation $f,$ with $Z$ at least of dimension $2.$ We consider the family of analytic $D$-modules generated by the powers of $1/f$ and describe it in terms of the pole order filtration on the de Rham cohomology of the complement of $\{f=0\}$ in the neighbourhood of the singularity.
 
\end{abstract}

\section{Introduction}

The $\mathcal{D}$-modules generated by powers of a polynomial (or analytic function) $f$ have been the topic of several noted publications in the last decade, for example, \cite{MR3867305}, \cite{MR4322001}, \cite{10.1093/imrn/rnac369} and \cite{saito2022length}. On the one hand, they are elementary objects accessible to beginners in $\mathcal{D}$-module theory. On the other hand, they relate to analytic invariants and Hodge theory in deep and subtle ways.

This note provides a new, elementary approach to describing these $\mathcal{D}$-modules in the general isolated singularity case in terms of the pole order filtration on the de Rham cohomology of the algebraic link of the singularity.

Our results include: \begin{itemize} 
\item A new approach to, and a new proof of, Vilonen's characterization of the intersection cohomology $\mathcal{D}$-module (\cite[Theorem]{MR796193}), presented in Theorem \ref{thm: r} and Remark \ref{rmk: Vilonen}.

\item A new computation of the length of the $\mathcal{D}$-module of meromorphic functions (Theorem \ref{thm: r}).

\item An elementary description of the Hodge structure of the  $\mathcal{D}$-module of meromorphic functions (Theorem \ref{thm: r}).

\item A full solution to the question of the length of the $\mathcal{D}$-module generated by $1/f$ and of the corresponding Poisson cohomology (see \cite{MR3867305} and \cite[Conjecture 3.8]{MR3283930}) in Corollary \ref{cor: A}.

\item Connections between top-forms decompositions with prescribed pole order and the $\mathcal{D}$-modules generated by a power of $\frac{1}{f}$ via generalizations of Vilonen's theorem, described in Corollary \ref{cor: P}.

\item Demonstrating the importance of $\mathcal{D}$-submodules generated by pieces of the Hodge or pole order filtrations, first considered in \cite{10.1093/imrn/rnac369} and studied in Corollary \ref{cor: Hodge} and Theorem \ref{thm: M}.

\item Explaining the failure of the conjecture from \cite{MR3867305} (equivalent in terms of the Hamiltonian flow to \cite[Conjecture 3.8]{MR3283930}), as first noted in \cite{10.1093/imrn/rnac369} (see also \cite{saito2022length}). This is discussed at the end of the introduction.

\end{itemize}

Finally, we note that our approach has already led to new results, see for example the updates to  \cite{saito2022length}.
  
We now describe the contents in more technical detail. Let $f$ be a complex analytic function in $n$ variables, $n\geq3$, and assume that $Z:=\{f=0\}$ is reduced and has an isolated singularity at $o.$ Our main tool is the product pairing in the neighbourhood of the singularity between the meromorphic functions with poles along $Z$ and the top regular forms, with values in the $n$-th de Rham cohomology group $H'$ of the complement of $Z.$

Let $\delta_o$ be the irreducible $\mathcal{D}$-module supported at $o.$ In Theorem \ref{thm: r}, we show that the pairing can be interpreted as a $\mathcal{D}$-module map $r:O(*Z)_o\to \delta_o\otimes H'$, which is surjective with kernel equal to the Intersection Homology $\mathcal{D}$-module $\mathcal{L}_o.$ The latter relies on Vilonen's characterization of that $\mathcal{D}$-module, of which our result can be viewed as a generalization (e.g. Corollary \ref{cor: P}, see also Remark \ref{rmk: Vilonen}). The morphism $r$ is especially convenient to study the $\mathcal{D}$-submodules of $O(*Z)_o$ generated by powers of $1/f$ or by the pieces of Hodge filtration, see Corollaries \ref{cor: Hodge} and \ref{cor: P}.

Using the above, \cite{10.1093/imrn/rnac369} implies that the dimension of the first piece $F_0H’$ of the Hodge filtration is the reduced genus $g$ of \cite{MR3867305}, while the length of $\frac{\mathcal{D}\frac{1}{f}}{\mathcal{L}_o}$ is dim $P_0H'$, where $P_0H’$ is the set of classes generated by forms with pole order at most $1$ along $Z$. However it is well-known that the pole order filtration is, in general, strictly greater than the former, see, e.g. \cite[5.4 ii]{10.1007/BFb0086378} and \cite[(b) of Theorem 0.3]{CM_1991}. This explains the failure of \cite[Conjecture 1.7]{MR3867305}. Finally, let us note that even though the natural language of the note is that of analytic D-modules, we deduce results on length in the algebraic case as well, see Corollary \ref{cor: A}.

\subsection*{Acknowledgements} I am grateful to M. Mustata for sharing his then-preprint  \cite{10.1093/imrn/rnac369}  with S. Olano and especially to C. Sabbah for his continuous feedback highlighting in particular the importance of the analytic topology in Vilonen’s theorem. I thank also A. Dimca, for sending me some useful references. Finally, I am grateful to the anonymous referees for remarks that have helped improve the quality of the presentation. This work was supported by the Natural Sciences and Engineering Research Council of Canada (NSERC), [RGPIN-2020-06075]. Cette recherche a été financée par le Conseil de recherches en sciences naturelles et en génie du Canada (CRSNG), [RGPIN-2020-06075].

\section{A morphism of $\mathcal{D}$-modules}

\subsection{Setup and conventions} \label{setup} Let $n\geq3.$ Let $X$ be a complex analytic manifold of dimension $n,$ let $Z$ be a hypersurface of $X$ that has isolated singularities and let $U\subset X$ be the open complement of $Z.$ By restricting to an open neighbourhood of a singularity we may assume that $Z$ has a unique singularity $o,$ which we do from now on. By a $D$-module, we mean a left coherent, analytic $D_X$-module, and by a $D_o$-module, we mean a finitely generated left module over the stalk of $D_X$ at the point $o.$ For a holonomic $D$-module $M,$ we let $DR^l(M)$ be the $l$-th cohomology group of the De Rham complex of $M.$ We use the same notation $DR^l(N)$ for $N$ a holonomic $D_o$-module. We let $\mathcal{O}(\star Z)$ be the $D$-module of meromorphic functions on $X$ with poles along $Z.$ 

\subsection{Construction}

Let us first recall standard facts.

Let $k$ be a field, let $B$ be an arbitrary $k$-algebra and let $M$ be a right (resp. left) $B$-module. Then for an arbitrary $k$-vector space $W,$ the space of $k$-linear maps $L(M, W)$ from $M$ to $W$ is a left (resp. right) $B$-module, where the action is given by $bf(m):= f(mb)$ (resp. $fb(m):= f(bm)$), for all $b\in B, m\in M, f\in L(M,W).$  In the following lemma, we apply this to the right $D_0$-module $\Omega_o^n$ for $k=\mathbb{C}.$  

\begin{lmm} \label{lmm: delta} Let $V$ be a finite-dimensional complex vector space and let $o$ be a point of $X.$ For $\Omega_o^n$ the stalk at $o$ of the sheaf of differential $n$-forms, the space of linear maps $L(\Omega_o^n, V)$ from the right $D_o$-module $\Omega_o^n$ to $V$ is naturally a $D_o$-module. Moreover the $D_o$-submodule $L(\Omega_o^n, V)_o$ of linear maps annihilated by a power of the ideal of $o$ is isomorphic to the $D_o$-module $\delta_o\otimes_{\mathbb{C}}V,$ where $\delta_o$ is the irreducible $D_o$-module supported at $o.$   
	
\end{lmm}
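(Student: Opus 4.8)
The plan is to work in local coordinates $x_1,\dots,x_n$ centred at $o$, so that $D_o = \mathbb{C}\{x_1,\dots,x_n\}\langle \partial_1,\dots,\partial_n\rangle$ and $\Omega_o^n$ is free of rank one as a right $D_o$-module, generated by $\omega = dx_1\wedge\cdots\wedge dx_n$; explicitly $\omega\cdot x_i = x_i\omega$ and $\omega\cdot\partial_i = -\partial_i\omega$ (or rather $(g\,\omega)\cdot\partial_i = -(\partial_i g)\omega$). First I would record that, as a right $D_o$-module, $\Omega_o^n \cong \mathbb{C}\{x\}$, with the action twisted by the standard anti-automorphism of $D_o$; hence a $\mathbb{C}$-linear map $\varphi\colon\Omega_o^n\to V$ is the same as a $\mathbb{C}$-linear map $\mathbb{C}\{x\}\to V$, and the induced left $D_o$-structure on $L(\Omega_o^n,V)$ is: $(x_i\cdot\varphi)(g\omega) = \varphi(x_i g\,\omega)$ and $(\partial_i\cdot\varphi)(g\omega) = -\varphi(\partial_i g\,\omega)$.

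Next I would identify the subspace $L(\Omega_o^n,V)_o$ of maps killed by a power of $\mathfrak{m} = (x_1,\dots,x_n)$. Since $\varphi\in L(\Omega_o^n,V)$ is killed by $\mathfrak{m}^N$ exactly when $\varphi(x^\alpha g\,\omega)=0$ for all $|\alpha|=N$ and all $g$, i.e. when $\varphi$ factors through the finite-dimensional quotient $\Omega_o^n/\mathfrak{m}^N\Omega_o^n \cong (\mathbb{C}\{x\}/\mathfrak{m}^N)\,\omega$, I get
\[
L(\Omega_o^n,V)_o \;=\; \bigcup_{N\geq 1} L\!\left(\Omega_o^n/\mathfrak{m}^N\Omega_o^n,\, V\right)\;=\;\varinjlim_N \operatorname{Hom}_{\mathbb{C}}\!\left(\mathbb{C}\{x\}/\mathfrak{m}^N,\,V\right).
\]
This is the $\mathfrak{m}$-torsion part of the continuous dual, tensored with $V$. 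For $V = \mathbb{C}$ this is the classical computation: $\varinjlim_N (\mathbb{C}\{x\}/\mathfrak{m}^N)^\vee$ is the space spanned by the functionals $g\,\omega \mapsto (\partial^\alpha g)(o)$, $\alpha\in\mathbb{N}^n$, which is precisely $\delta_o = D_o/D_o\cdot\mathfrak{m} \cong \mathbb{C}[\partial_1,\dots,\partial_n]$ as a left $D_o$-module — here the functional "evaluate $\partial^\alpha$ at $o$" corresponds to the monomial $\partial^\alpha\in\delta_o$, and one checks the left actions of $x_i$ and $\partial_i$ match on both sides (multiplication by $x_i$ on $\delta_o$ corresponds on functionals to the adjoint of multiplication by $x_i$, i.e. lowering, and likewise for $\partial_i$). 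The general case follows by noting the functor $\operatorname{Hom}_{\mathbb{C}}(-,V)$ commutes with the direct limit over $N$ and that $\operatorname{Hom}_{\mathbb{C}}(\mathbb{C}\{x\}/\mathfrak{m}^N,V)\cong \operatorname{Hom}_{\mathbb{C}}(\mathbb{C}\{x\}/\mathfrak{m}^N,\mathbb{C})\otimes_{\mathbb{C}}V$ since $V$ is finite-dimensional; this isomorphism is $D_o$-linear because the $D_o$-action on $L(\Omega_o^n,V)$ is entirely through the $\Omega_o^n$ argument, so $V$ is an inert tensor factor. Hence $L(\Omega_o^n,V)_o \cong \delta_o\otimes_{\mathbb{C}}V$.

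The only genuinely substantive point — and the one I would be most careful about — is checking that the identification of $\varinjlim_N(\mathbb{C}\{x\}/\mathfrak{m}^N)^\vee$ with $\delta_o$ is an isomorphism of \emph{left} $D_o$-modules and not merely of vector spaces: one must verify that under the twisted right action on $\Omega_o^n$, the adjoint left action on functionals sends the functional $\partial^\alpha|_o$ to the correct combination, so that the basis $\{\partial^\alpha|_o\}_\alpha$ of the functional side maps to the standard monomial basis $\{\partial^\alpha\}_\alpha$ of $\delta_o = D_o/D_o\mathfrak{m}$ compatibly with both the $x_i$- and $\partial_i$-actions. The sign in the right action of $\partial_i$ on $\Omega_o^n$ is exactly what makes this work out, and I would highlight that it is the source of the "twist" that turns $\mathbb{C}\{x\}$-type data into the $\mathbb{C}[\partial]$-type module $\delta_o$. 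Finally, irreducibility and the characterization of $\delta_o$ as the unique simple $D_o$-module supported at $o$ is standard (Kashiwara), so no further argument is needed there beyond citing it.
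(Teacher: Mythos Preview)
Your argument is correct but takes a different, more explicit route than the paper. The paper's proof is a two-line appeal to Kashiwara's equivalence: any $D_o$-module $M$ supported at $o$ is of the form $\delta_o\otimes_{\mathbb{C}} W$ for a finite-dimensional $W$, recovered as $M/\mathfrak{m}M$ (equivalently the $\mathfrak{m}$-annihilated subspace); for $M=L(\Omega_o^n,V)_o$ this subspace is $L(\Omega_o^n/\mathfrak{m}\Omega_o^n,V)\simeq L(\mathbb{C},V)\simeq V$, and the lemma follows. You instead construct the isomorphism by hand in local coordinates, matching the functionals $g\omega\mapsto(\partial^\alpha g)(o)$ on the torsion dual $\varinjlim_N(\mathbb{C}\{x\}/\mathfrak{m}^N)^\vee$ with the monomial basis of $\delta_o=D_o/D_o\mathfrak{m}$, and then tensoring with $V$. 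In effect you are reproving the relevant instance of Kashiwara's equivalence rather than citing it; this buys you a self-contained, concrete isomorphism, while the paper's approach is shorter and coordinate-free. One small correction to your explicit formula: if you actually carry out the compatibility check you flag, you will find $x_i\cdot\varphi_\alpha=+\alpha_i\,\varphi_{\alpha-e_i}$ on the functional side but $x_i\cdot[\partial^\alpha]=-\alpha_i\,[\partial^{\alpha-e_i}]$ in $\delta_o$, so the $D_o$-linear identification is $\varphi_\alpha\mapsto(-\partial)^\alpha$ rather than $\varphi_\alpha\mapsto\partial^\alpha$. This sign does not affect the conclusion.
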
 

\begin{proof} Only the last part requires further proof. Under Kashiwara's equivalence \cite[Lemma 2.6.18]{MR1232191}, a holonomic left $D$-module $M$ supported at $o$ corresponds to the finite-dimensional vector space $\frac{M}{\frak{m}M},$  where $\frak{m}$ is the ideal of $o,$ and $M\simeq \delta_o\otimes_\mathbb{C} \frac{M}{\frak{m}M}.$ Therefore, the $D$-module $L(\Omega_o^n, V)_o$ corresponds to the vector space of linear maps from $\Omega_o^n/\frak{m}\Omega_o^n\simeq \mathbb{C}$ to $V,$ which is isomorphic to $V.$ Hence the existence of an isomorphism $L(\Omega_o^n, V)_o\simeq \delta_o\otimes_{\mathbb{C}}V. $  
\end{proof}

We will use the following lemma. For an element $\lambda$ of $L(\Omega_o^n, V),$ we denote by $\mathrm{Im}(\lambda)$ the image of the corresponding linear map $\Omega_o^n\to V.$

\begin{lmm} \label{lmm: surjectivity} Let $V$ be a finite-dimensional complex vector space and let $N$ be a $D_o$-submodule of $L(\Omega_o^n, V)_o.$ Assume that for all $v\in V,$ there exists an element $\lambda_v$ of $N$ such that $v\in \mathrm{Im}(\lambda_v).$ Then $N=L(\Omega_o^n, V)_o.$ \end{lmm}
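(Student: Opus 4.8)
The plan is to reduce the statement to elementary linear algebra by means of Kashiwara's equivalence, reusing the analysis from the proof of Lemma~\ref{lmm: delta}. By that lemma, $M := L(\Omega_o^n, V)_o$ is isomorphic to $\delta_o \otimes_{\mathbb{C}} V$, hence is holonomic and supported at $o$; therefore so is its $D_o$-submodule $N$, and Kashiwara's equivalence \cite[Lemma 2.6.18]{MR1232191} applies to both. Recall that this equivalence sends a holonomic $D_o$-module $P$ supported at $o$ to the finite-dimensional vector space $P/\mathfrak{m}P$, is exact and fully faithful, and has quasi-inverse $\,\cdot\,\otimes_{\mathbb{C}}\delta_o$; in particular it sets up a bijection between the $D_o$-submodules of $\delta_o \otimes_{\mathbb{C}} V$ and the linear subspaces of $V$, a subspace $W$ corresponding to $\delta_o \otimes_{\mathbb{C}} W$.

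Next I would track what this bijection becomes under the identification $L(\Omega_o^n, V)_o \simeq \delta_o \otimes_{\mathbb{C}} V$. Since the isomorphism of Lemma~\ref{lmm: delta} arises from the natural identification $M/\mathfrak{m}M \simeq L(\Omega_o^n/\mathfrak{m}\Omega_o^n, V) \simeq L(\mathbb{C}, V) \simeq V$, which is functorial in $V$, a subspace $W \subseteq V$ corresponds precisely to the sub-$D_o$-module $L(\Omega_o^n, W)_o \subseteq L(\Omega_o^n, V)_o$ consisting of those linear maps whose image is contained in $W$. Hence there is a unique subspace $W \subseteq V$ with $N = L(\Omega_o^n, W)_o$; in particular $\mathrm{Im}(\lambda) \subseteq W$ for every $\lambda \in N$.

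Finally I would invoke the hypothesis: given $v \in V$, choose $\lambda_v \in N$ with $v \in \mathrm{Im}(\lambda_v)$; then $v \in \mathrm{Im}(\lambda_v) \subseteq W$, so $V \subseteq W$, i.e. $W = V$ and $N = L(\Omega_o^n, V)_o$, as desired. The only point requiring a little care is the compatibility of the isomorphism of Lemma~\ref{lmm: delta} with the postcomposition maps $L(\Omega_o^n, W) \to L(\Omega_o^n, V)$ induced by an inclusion $W \hookrightarrow V$, i.e. its functoriality in $V$; once that is recorded there is no real obstacle, as the Kashiwara input is already in hand.
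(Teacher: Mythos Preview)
Your argument is correct and is essentially the same as the paper's: identify, via Lemma~\ref{lmm: delta} and Kashiwara's equivalence, the submodule $N$ with $L(\Omega_o^n, W)_o$ for some subspace $W\subseteq V$, then use the hypothesis to force $W=V$. You spell out the functoriality in $V$ a bit more explicitly than the paper does, but the route is the same.
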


\begin{proof} By Lemma \ref{lmm: delta} and Kashiwara's equivalence \cite[Lemma 2.6.18]{MR1232191}, $N=L(\Omega_o^n, V')_o$ for a vector subspace $V'$ of $V.$ Thus if $v\in V\setminus V',$ then for all $\lambda\in N=L(\Omega_o^n, V')_o, v\not\in \mathrm{Im}(\lambda).$ This contradicts the assumption on $N.$ Hence $N=L(\Omega_o^n, V)_o.$\end{proof} 

In our setup \ref{setup}, we denote by $\mathcal{L}$ the $D$-module pre-image in $\mathcal{O}(\star Z)$ of the intersection cohomology $D$-module $\mathcal{L}_Z\subseteq \frac{\mathcal{O}(\star Z)}{\mathcal{O}}$ associated with $Z.$ We now recall Vilonen's description of the intersection homology $D$-module in terms of residues.

\begin{thm}[Vilonen] \label{thm: Vilonen} An element $s$ of the stalk $\mathcal{O}(\star Z)_o$ is in the stalk $\mathcal{L}_o$ if and only if $\forall \omega'\in \Omega_o^n, s\omega'$ is exact, i.e. $$\mathcal{L}_o= \{s\in \mathcal{O}(\star Z)_o|\text{ For all }\omega'\in \Omega_o^n, s\omega'\in d(\Omega_o^{n-1}(\star Z))\}.$$	
\end{thm}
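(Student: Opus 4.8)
The plan is to prove Vilonen's characterization by identifying $\mathcal{L}_o$ as the kernel of a residue-type map and then matching that kernel with the exactness condition. First I would recall the De Rham interpretation: since $Z$ has an isolated singularity at $o$, away from $o$ the pair $(X,Z)$ is smooth and the intersection cohomology $D$-module $\mathcal{L}_Z$ agrees with $\mathcal{O}(\star Z)/\mathcal{O}$ there, so the obstruction to $s \in \mathcal{O}(\star Z)_o$ lying in $\mathcal{L}_o$ is supported at $o$. Concretely, the quotient $\mathcal{O}(\star Z)_o / \mathcal{L}_o$ is a $D_o$-module supported at $o$, hence by Kashiwara's equivalence it is of the form $\delta_o \otimes_{\mathbb C} V$ for a finite-dimensional $V$, and it is governed by its solutions. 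The natural receptacle is $H' = DR^n$ of the complement, and the pairing $(s,\omega') \mapsto [s\omega']$ realizes the quotient map. So the strategy is: (i) show the right-hand side $K := \{s : s\omega' \text{ exact for all } \omega' \in \Omega_o^n\}$ is a $D_o$-submodule of $\mathcal{O}(\star Z)_o$ containing $\mathcal{O}_o$; (ii) show $K \subseteq \mathcal{L}_o$; (iii) show $\mathcal{L}_o \subseteq K$.

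For step (i), the containment $\mathcal{O}_o \subseteq K$ is because a holomorphic top-form on a neighborhood of $o$ in $\mathbb{C}^n$ is always exact (Poincaré lemma, using $n \geq 3 > 0$). That $K$ is stable under the $D_o$-action is a Cartan-calculus computation: for a vector field $\xi$, $(\xi s)\omega' = \xi(s\omega') - s(\xi\omega')$ where $\xi$ acts on forms via the Lie derivative $L_\xi = d\iota_\xi + \iota_\xi d$; since $s\omega'$ is exact, $\xi(s\omega')=L_\xi(s\omega')$ is exact, and $s(\xi\omega')$ is exact because $\xi\omega' \in \Omega_o^n$ again — so $(\xi s)\omega'$ is exact, and one argues similarly (more easily) for multiplication by functions. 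This is the routine part.

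The substantive direction is the equality $K = \mathcal{L}_o$, and here I would invoke the residue/duality description of $\mathcal{L}_Z$. On $U = X \setminus Z$, the $D$-module $\mathcal{O}(\star Z)/\mathcal{O}$ restricted to $U$ is zero, and the defect of $\mathcal{L}_o$ inside $\mathcal{O}(\star Z)_o$ can be measured by the $n$-th local cohomology / De Rham cohomology of the punctured neighborhood: precisely, $H' \cong H^n_{\mathrm{dR}}(U_o)$ where $U_o$ is a small punctured neighborhood of $o$ minus $Z$, and the map $r : \mathcal{O}(\star Z)_o \to \delta_o \otimes H'$ sending $s$ to the functional $\omega' \mapsto [s\omega'] \in H'$ has image spanning (by the perfectness of the pairing between meromorphic functions and top forms modulo exact forms, a form of Grothendieck–Serre local duality) and kernel exactly $\mathcal{L}_o$ — this last fact being Vilonen's theorem, which one can either cite directly or re-derive via the characterization of $\mathcal{L}_Z$ as the image of $\mathcal{H}^0 j_{!*}$ together with the fact that the De Rham cohomology of that intermediate extension at $o$ vanishes in the relevant degree. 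Since $K = \ker r$ by definition of $K$, this gives $K = \mathcal{L}_o$.

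The main obstacle is making the identification $\mathcal{L}_o = \ker r$ rigorous without circularity, i.e. either giving a self-contained argument that the kernel of the pairing map is the intersection cohomology $D$-module (which is essentially the content of Vilonen's theorem and likely the point where the paper will do real work, perhaps by reducing to the statement that $j_{!*}\mathbb{C}_U[n]$ has no De Rham cohomology in degree $0$ at the singular point, i.e. $DR^0(\mathcal{L}_o / \text{something}) = 0$), or by citing \cite{MR796193} and checking the translation between Vilonen's formulation and the exactness condition here. I would present it as: the inclusion $K \subseteq \mathcal{L}_o$ follows because an element killed by the pairing lifts to a class in the intermediate extension, while $\mathcal{L}_o \subseteq K$ follows because sections of $\mathcal{L}_o$ pair to zero with all top-forms by the very construction of $\mathcal{L}_Z$ as a sub-$D$-module on which residues vanish — then note that since both $K$ and $\mathcal{L}_o$ have the same quotient in $\mathcal{O}(\star Z)_o/\mathcal{O}_o$ (namely the kernel of the residue pairing on $\mathcal{O}(\star Z)_o/\mathcal{O}_o$), the two coincide. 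I expect the write-up to lean on the earlier Lemmas \ref{lmm: delta} and \ref{lmm: surjectivity} to control the quotient $\delta_o \otimes H'$, so that only a finite-dimensional linear-algebra statement remains.
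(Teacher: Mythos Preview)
Your setup is sound: the Cartan-calculus check that $K$ is a $D_o$-submodule is correct, and identifying $K=\ker r$ is the right move. For the inclusion $\mathcal{L}_o\subseteq K$ you gesture at the right fact (a De~Rham vanishing for the intermediate extension), but ``by the very construction of $\mathcal{L}_Z$ as a sub-$D$-module on which residues vanish'' is circular --- that is the statement being proved. The paper makes this step precise: $N\subseteq K$ iff the image of $DR^n(N)$ in $DR^n(\mathcal{O}(\star Z)_o)$ vanishes, and one cites $DR^n(\mathcal{L}_o)=0$ (\cite[Lemma~5.7.18]{MR1232191}).

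The genuine gap is the reverse inclusion $K\subseteq\mathcal{L}_o$. Your justification ``an element killed by the pairing lifts to a class in the intermediate extension'' is the claim restated, and you flag the circularity without resolving it. Contrary to your expectation, the paper does \emph{not} use Lemmas~\ref{lmm: delta} and~\ref{lmm: surjectivity} here; it runs a direct long-exact-sequence argument. Since $\mathcal{L}_o\subseteq K$ and $\mathcal{O}(\star Z)_o/\mathcal{L}_o$ is supported at $o$, one has $K/\mathcal{L}_o\simeq\delta_o^j$. From $0\to K\to\mathcal{O}(\star Z)_o\to\mathcal{O}(\star Z)_o/K\to 0$ and $DR^{n-1}(\delta_o)=0$ one gets $DR^n(K)\hookrightarrow DR^n(\mathcal{O}(\star Z)_o)$, hence $DR^n(K)=0$ by the very definition of $K$. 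Then from $0\to\mathcal{L}_o\to K\to\delta_o^j\to 0$, using $DR^{n+1}(\mathcal{L}_o)=0$ and $DR^n(\delta_o)=\mathbb{C}$, one finds $\mathbb{C}^j\simeq DR^n(K)/DR^n(\mathcal{L}_o)=0$, forcing $j=0$. (The alternative route via surjectivity of $r$ that you sketch does work, but it requires the length input $\mathrm{length}\,\mathcal{O}(\star Z)_o=1+\dim H'$, which you never invoke; see Remark~\ref{rmk: Vilonen}.)
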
 

\begin{proof} This is a reformulation of \cite[Theorem]{MR796193}, see \cite[5.7.21]{MR1232191} for a textbook treatment.	
We include the proof below for the benefit of the reader. Let $$V:= \{s\in \mathcal{O}(\star Z)_o|\text{ For all }\omega'\in \Omega_o^n, s\omega'\in d(\Omega^{n-1}_o(\star Z))\}.$$ It follows by a special case of the argument given in the proof of Theorem \ref{thm: r} below that $V$ is a $D_o$-submodule. We want to prove that $V=\mathcal{L}_o.$ 

Let us first show that $\mathcal{L}_o\subseteq V.$ Note that for $N \subseteq \mathcal{O}(\star Z)_o$ a $D_o$-submodule, $N\subseteq V$ if and only if the image of $DR^n(N)$ in the De Rham cohomology group $DR^n(\mathcal{O}(\star Z))_o$ vanishes. 
But $DR^n(\mathcal{L}_o)= DR^n(\mathcal{L})_o=0$ by \cite[Lemma 5.7.18]{MR1232191}, hence $\mathcal{L}_o\subseteq V.$ 
Let us now show that $\mathcal{L}_o=V.$ The quotient $\frac{V}{\mathcal{L}_o}$ is supported at the singularity since it is the case for $\frac{\mathcal{O}(\star Z)_o}{\mathcal{L}_o}.$ Therefore $\frac{V}{\mathcal{L}_o}\simeq \delta_o^j$ for some $j\geq0.$ By the long exact sequence of the $DR^i$'s applied to the short exact sequence $0\to V\to \mathcal{O}(\star Z)_o\to \frac{\mathcal{O}(\star Z)_o}{V}\to 0,$ we have that the natural map $DR^n(V)\to DR^n(\mathcal{O}(\star Z)_o)$ is an injection, because $DR^{n-1}(\delta_o)=0.$ Hence $DR^n(V)=0$ by the definition of $V.$ But using the long exact sequence of the $DR^i$'s associated with the short exact sequence $0\to \mathcal{L}_o\to V\to \delta_o^j\to 0,$ we deduce from $DR^{n-1}(\delta_o)=DR^{n+1}(\mathcal{L}_o)=0$ and $DR^n(\delta_o)=\mathbb{C}$ that $\mathbb{C}^j\simeq \frac{DR^n(V)}{DR^n(\mathcal{L}_o)}.$ Since the latter vanishes, we must have $j=0$ and $\mathcal{L}_o=V.$\end{proof}

Let us now prove the main theorems of this note. Note that the de Rham cohomology $DR^n(\mathcal{O}(\star Z)_o)$ is endowed with a Hodge structure, which we denote $H'.$   

\begin{thm} \label{thm: r} Under the hypotheses \ref{setup}, the pairing $$\mathcal{O}(\star Z)_o\times \Omega_o^n \xrightarrow{B} H';$$ $$(s, \omega') \mapsto [s\omega'],$$ where $[-]$ is the cohomology class of a form, induces a surjective homomorphism of 
$D_o$-modules 
	
$$\mathcal{O}(\star Z)_o \xrightarrow{r} L(\Omega_o^n, H')_o;$$ 

$$s\mapsto B(s,-),$$ where $o$ is the singularity. This homomorphism is compatible with the Hodge filtrations, where the Hodge filtration on $L(\Omega_o^n, H')_o$ is the one induced by that of $H'$ under Kashiwara's equivalence for Hodge $D$-modules. The kernel of $r$ is the $D_o$-module $\mathcal{L}_o.$  
	
\end{thm}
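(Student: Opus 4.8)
The plan is to verify three things in turn: that $r$ is a well-defined $D_o$-module homomorphism, that it is surjective, and that its kernel is exactly $\mathcal{L}_o$; the Hodge-compatibility is then a bookkeeping addendum. First I would check that $r$ lands in the correct submodule and is $D_o$-linear. Well-definedness of $B$ is immediate since $s\omega'$ is a meromorphic top-form with poles along $Z$, hence defines a class in $H' = DR^n(\mathcal{O}(\star Z)_o)$. The image of $r(s)$ is annihilated by a power of $\mathfrak{m}$: because $H'$ is supported at $o$ as a contribution to the cohomology, the class $[s\omega']$ depends only on the germ and, more to the point, $L(\Omega_o^n,H')_o$ is by definition the $\mathfrak{m}$-power-torsion part, so one must check $r(s) \in L(\Omega_o^n, H')_o$ — this follows because $\mathcal{O}(\star Z)_o/\mathcal{L}_o$ is supported at $o$ and $r$ kills $\mathcal{L}_o$ (shown below), or directly since multiplication by a function vanishing to high order at $o$ makes $s\omega'$ exact. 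For $D_o$-linearity: a vector field $\theta$ acts on $\mathcal{O}(\star Z)_o$ by derivation and on $L(\Omega_o^n,H')_o$ by the rule $(\theta\cdot\lambda)(\omega') = -\lambda(\theta\cdot\omega') - \lambda(\mathrm{div}(\theta)\omega')$ coming from the right $D_o$-module structure on $\Omega_o^n$; the identity $r(\theta s) = \theta\, r(s)$ then reduces to the statement that $[\,(\theta s)\omega' + s(\theta\cdot\omega') + s\,\mathrm{div}(\theta)\,\omega'\,] = 0$ in $H'$, which holds because that combination equals $d(\iota_\theta(s\omega'))$ up to sign (Cartan's formula / the Lie derivative of the top form $s\omega'$ along $\theta$ is exact since $H^n$ of an $n$-manifold has no higher forms to worry about). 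This is the "argument given in the proof of Theorem \ref{thm: r}" referenced in the proof of Theorem \ref{thm: Vilonen}.

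Next, surjectivity. The target is $L(\Omega_o^n,H')_o \simeq \delta_o \otimes_{\mathbb{C}} H'$ by Lemma \ref{lmm: delta}, and I would apply Lemma \ref{lmm: surjectivity} with $V = H'$ and $N = \mathrm{Im}(r)$, which is a $D_o$-submodule of $L(\Omega_o^n,H')_o$. Concretely, Lemma \ref{lmm: surjectivity} requires that for each class $v \in H'$ there be some $s$ with $v \in \mathrm{Im}(r(s))$, i.e. some $s$ and some $\omega'$ with $[s\omega'] = v$. But every class in $H' = DR^n(\mathcal{O}(\star Z)_o)$ is by definition represented by a meromorphic top-form $\eta$ with poles along $Z$; writing $\eta = s\,\omega'_0$ for a fixed local generator $\omega'_0$ of $\Omega_o^n$ and $s = \eta/\omega'_0 \in \mathcal{O}(\star Z)_o$ gives exactly $r(s)(\omega'_0) = [\eta] = v$. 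Hence $N$ meets the hypothesis of Lemma \ref{lmm: surjectivity} and $r$ is onto.

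Finally, the kernel. The containment $\mathcal{L}_o \subseteq \ker r$ is precisely Vilonen's description (Theorem \ref{thm: Vilonen}): $s \in \mathcal{L}_o$ iff $s\omega'$ is exact for every $\omega' \in \Omega_o^n$, which says $r(s) = 0$. For the reverse, $\ker r$ is a $D_o$-submodule of $\mathcal{O}(\star Z)_o$ sandwiched as $\mathcal{L}_o \subseteq \ker r \subseteq \mathcal{O}(\star Z)_o$ with $\mathcal{O}(\star Z)_o/\mathcal{L}_o$ supported at $o$, so $\ker r/\mathcal{L}_o \simeq \delta_o^j$ for some $j \geq 0$. The short exact sequence $0 \to \ker r \to \mathcal{O}(\star Z)_o \to \mathrm{Im}(r) \to 0$ together with $\mathrm{Im}(r) = L(\Omega_o^n,H')_o \simeq \delta_o \otimes H'$ and the surjectivity already proven pins down $\ker r$: applying $DR^\bullet$ and using $DR^{n-1}(\delta_o)=0$, $DR^n(\delta_o)=\mathbb{C}$ gives that $DR^n(\ker r) \to DR^n(\mathcal{O}(\star Z)_o) = H'$ fits into an exact sequence whose cokernel is $H'$ itself (one copy of $\delta_o$ per basis vector of $H'$), forcing $DR^n(\ker r) = 0$. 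Then the sequence $0 \to \mathcal{L}_o \to \ker r \to \delta_o^j \to 0$ gives $\mathbb{C}^j \simeq DR^n(\ker r)/DR^n(\mathcal{L}_o) = 0$ since $DR^n(\mathcal{L}_o)=0$ and $DR^{n+1}(\mathcal{L}_o)=0$, so $j=0$ and $\ker r = \mathcal{L}_o$. The Hodge-filtration compatibility follows because $B$ is built from the natural mixed Hodge structure on $H'$ and multiplication by the (Hodge-theoretically trivial) factor $\omega'$; under Kashiwara's equivalence for Hodge modules the induced filtration on $L(\Omega_o^n,H')_o \simeq \delta_o \otimes H'$ is the one transported from $H'$, and $r$ respects it by construction.

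I expect the main obstacle to be the $D_o$-linearity of $r$ — more precisely, getting the sign conventions for the right $D_o$-action on $\Omega_o^n$ (and hence the induced left action on $L(\Omega_o^n,H')$) to mesh exactly with the Lie-derivative/Cartan-formula identity that makes $\mathrm{Lie}_\theta(s\omega')$ exact. Everything else is either a direct unwinding of definitions (well-definedness, surjectivity via Lemma \ref{lmm: surjectivity}) or a formal diagram chase with De Rham cohomology of holonomic $D_o$-modules (the kernel computation), closely parallel to the argument already spelled out in the proof of Theorem \ref{thm: Vilonen}.
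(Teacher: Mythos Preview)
Your proof is correct and follows the paper's route: $D_o$-linearity via exactness of a Lie-derivative (the paper does the same computation in coordinates with the partials $\partial_i$), landing in the $\mathfrak m$-torsion part because $\mathcal{O}(\star Z)_o/\mathcal{L}_o$ is supported at $o$, surjectivity via Lemma~\ref{lmm: surjectivity}, and the kernel via Theorem~\ref{thm: Vilonen}. One remark: your separate $DR^\bullet$ argument for the inclusion $\ker r \subseteq \mathcal{L}_o$ is redundant --- Vilonen's statement is an \emph{if and only if}, so once you invoke it you already have $\ker r=\{s:\ s\omega'\text{ exact for all }\omega'\}=\mathcal{L}_o$ outright, which is exactly how the paper argues; moreover, in that extra argument the step ``forcing $DR^n(\ker r)=0$'' is not complete as written, since knowing only that the next term in the long exact sequence is $H'$ does not by itself kill $DR^n(\ker r)$ --- what you actually need (and what is true) is that the map $DR^n(\ker r)\to H'$ has \emph{zero image} by the very definition of $\ker r$, and this, together with its injectivity, gives the vanishing (this is precisely the computation carried out in the proof of Theorem~\ref{thm: Vilonen}, which you are in effect redoing).
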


\begin{proof} 
Let us first show that the map $\mathcal{O}(\star Z)_o \to L(\Omega_o^n, DR^n(\mathcal{O}(\star Z)_o)) ;$ $$s\mapsto (\omega'\mapsto [s\omega'])$$ defines a morphism of $D_o$-modules and takes its values in $L(\Omega_o^n, DR^n(\mathcal{O}(\star Z)_o))_o.$ 

It follows directly from the definitions that the map is $\mathcal{O}$-linear. Moreover, the fact that the class of an exact form in $DR^n(\mathcal{O}(\star Z)_o)$ vanishes implies that $r$ is compatible with the actions of derivations. We may restrict ourselves to verifying it for the action of the partials $(\partial_i)_i$ corresponding to coordinates $(x_i)_i.$ Let $\omega$ be a volume form in the neighbourhood of $o$ and let $\omega^{(i)}$ be an $n-1$-form such that $dx_i\wedge \omega^{(i)}=\omega.$ Then $s\omega'= sg\omega$ for some holomorphic function $g$ and $d(sg\omega^{(i)})= \partial_i(s)\omega + s\partial_i(g)\omega.$ That is $\partial_i s\mapsto (g\omega\mapsto [\partial_i(s)g\omega)]= - [s\partial_i(g)\omega)].$ Hence the map is compatible with the right $D_o$-module action on $\Omega_o^n.$ Therefore we have a morphism of $D_o$-modules $\mathcal{O}(\star Z)_o \to L(\Omega_o^n, DR^n(\mathcal{O}(\star Z)_o)).$ 
Note that by Theorem \ref{thm: Vilonen}, the kernel of this morphism is $\mathcal{L}_o.$ But $\mathcal{O}(\star Z)_o/\mathcal{L}_o$ is supported at $o,$ hence $r$ factors through $L(\Omega_o^n, DR^n(\mathcal{O}(\star Z)_o))_o.$

That $r$ is surjective follows immediately from Lemma \ref{lmm: surjectivity}. Finally, the compatibility of $r$
 with the Hodge filtrations is a direct consequence of the construction of the Hodge filtration on the de Rham complex.\end{proof}

\begin{rmk}
Letting $Z_\infty$ be the Milnor fiber at o, we note that for $H:= H^{n-1}(Z_\infty)_1$ the unipotent monodromy part of the cohomology group of the Milnor fibre and $N$ the logarithm of the unipotent part of the monodromy, we have a natural identification of mixed Hodge structures $\gamma: H'\simeq \frac{H}{NH}.$ This follows from applying $DR^n$ to the short exact sequence $0\to M_f\simeq \mathcal{L}\to M_f^{''}\simeq \frac{\mathcal{O}(\star Z)}{\mathcal{O}}\to \frac{M_f^{''}}{M_f}\to 0$ of \cite[Remarks 3.2i]{MR2567401} and using the isomorphisms \cite[3.2.5 and 3.2.4]{MR2567401}. \end{rmk}

As a direct corollary, we get the following.

\begin{cor}\label{cor: Hodge} The image by $r$ of the $\mathcal{D}$-submodule $\mathcal{D}_oF_l \mathcal{O}(\star Z)_o$ generated by the $l$-th piece of the Hodge filtration on $\mathcal{O}(\star Z)_o$ is $L(\Omega_o^n, F_lH')_o,$ where $F_lH'$ is the $l$-th piece of the Hodge filtration on $H'.$ Therefore the length of $\frac{\mathcal{D}_oF_l \mathcal{O}(\star Z)_o}{\mathcal{L}_o}$ is $\mathrm{dim}F_lH'.$
\end{cor}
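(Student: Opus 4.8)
The plan is to deduce this Corollary directly from Theorem \ref{thm: r} together with Kashiwara's equivalence, treating it as essentially a formal consequence. Since $r$ is a surjective morphism of $D_o$-modules with kernel $\mathcal{L}_o$, it suffices to understand the image under $r$ of $\mathcal{D}_o F_l\mathcal{O}(\star Z)_o$. Because $r$ is $\mathcal{O}$-linear and compatible with the actions of derivations, it is a morphism of $D_o$-modules, so $r(\mathcal{D}_o F_l\mathcal{O}(\star Z)_o) = \mathcal{D}_o r(F_l\mathcal{O}(\star Z)_o)$; that is, the image is the $D_o$-submodule of $L(\Omega_o^n, H')_o$ generated by $r(F_l\mathcal{O}(\star Z)_o)$.

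First I would identify $r(F_l\mathcal{O}(\star Z)_o)$ as a subset of $L(\Omega_o^n, H')_o$. By the compatibility of $r$ with the Hodge filtrations asserted in Theorem \ref{thm: r}, $r$ sends $F_l\mathcal{O}(\star Z)_o$ into $F_l L(\Omega_o^n, H')_o$, which under Kashiwara's equivalence is $L(\Omega_o^n, F_lH')_o$ — using that the Hodge filtration on the Kashiwara-corresponding object $L(\Omega_o^n, H')_o \simeq \delta_o \otimes H'$ is $\delta_o \otimes F_\bullet H'$ (up to the shift built into the definition). Hence $r(\mathcal{D}_o F_l\mathcal{O}(\star Z)_o) \subseteq L(\Omega_o^n, F_lH')_o$, and the latter is already a $D_o$-submodule (it corresponds to the subspace $F_lH' \subseteq H'$ under Kashiwara), so this containment is an inclusion of $D_o$-modules.

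For the reverse inclusion I would invoke Lemma \ref{lmm: surjectivity}, applied with $V$ replaced by $F_lH'$ and $N := r(\mathcal{D}_o F_l\mathcal{O}(\star Z)_o) \subseteq L(\Omega_o^n, F_lH')_o$. The hypothesis of that lemma requires that every class $v \in F_lH'$ lie in the image of some $\lambda_v \in N$. But $F_lH' = F_l DR^n(\mathcal{O}(\star Z)_o)$ is, by the very construction of the Hodge filtration on the de Rham complex, spanned by classes of the form $[s\omega']$ with $s \in F_l\mathcal{O}(\star Z)_o$ and $\omega' \in \Omega_o^n$ — that is, by elements of $\mathrm{Im}(r(s))$ for $s \in F_l\mathcal{O}(\star Z)_o$; one then notes that $r(s) \in N$. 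Strictly speaking, one should check that for each individual $v \in F_lH'$ one can realize $v$ itself (not just a spanning set) as lying in the image of a single $\lambda_v \in N$, but since $\mathrm{Im}(r(s)) = \{[s\omega'] : \omega' \in \Omega_o^n\}$ is an $\mathcal{O}_o$-module quotient and in fact, by $D_o$-linearity of $r$ restricted to $\mathcal{D}_o s$, one has that $\bigcup_{s} \mathrm{Im}(r(s))$ over $s \in \mathcal{D}_o F_l\mathcal{O}(\star Z)_o$ is a union of subspaces whose span is all of $F_lH'$; combining with the structure $N = L(\Omega_o^n, V')_o$ from Lemma \ref{lmm: surjectivity} forces $V' = F_lH'$. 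Thus $N = L(\Omega_o^n, F_lH')_o$, giving equality.

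Finally, for the length statement, I would use that $r$ induces an isomorphism $\mathcal{D}_o F_l\mathcal{O}(\star Z)_o / \mathcal{L}_o \xrightarrow{\sim} L(\Omega_o^n, F_lH')_o$, since $\mathcal{L}_o = \ker r$ is contained in $\mathcal{D}_o F_l\mathcal{O}(\star Z)_o$ (as $\mathcal{L}_o \subseteq F_l\mathcal{O}(\star Z)_o$ for the relevant normalization of the filtration, or at worst $\mathcal{L}_o$ lies in the submodule generated — this containment should be noted). Then $L(\Omega_o^n, F_lH')_o \simeq \delta_o \otimes F_lH'$ by Lemma \ref{lmm: delta}, whose length as a $D_o$-module equals $\dim_{\mathbb{C}} F_lH'$ since $\delta_o$ is irreducible. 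The main obstacle I anticipate is the careful bookkeeping around the passage from ``$F_lH'$ is spanned by classes $[s\omega']$'' to the precise hypothesis of Lemma \ref{lmm: surjectivity} — i.e., making sure the spanning statement genuinely upgrades to the membership statement that lemma needs — and double-checking the normalization/shift of the Hodge filtration under Kashiwara's equivalence so that $F_l L(\Omega_o^n, H')_o$ really is $L(\Omega_o^n, F_lH')_o$ and not an off-by-one variant.
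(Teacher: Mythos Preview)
Your approach is essentially the paper's, but with one misstep that you yourself flag at the end: the Hodge filtration on $\delta_o\otimes H'$ under Kashiwara's equivalence is \emph{not} $\delta_o\otimes F_lH'$, it is $\sum_{i+j\le l} G_i\delta_o\otimes F_jH'$, where $G_\bullet$ is the good filtration on $\delta_o$ induced by its canonical generator. Fortunately this does not break your inclusion $r(F_l\mathcal{O}(\star Z)_o)\subseteq L(\Omega_o^n,F_lH')_o$, since $\sum_{i+j\le l} G_i\delta_o\otimes F_jH'\subseteq \delta_o\otimes F_lH'$ anyway; but you should state it correctly.

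For the reverse inclusion, your appeal to Lemma~\ref{lmm: surjectivity} together with ``$F_lH'$ is spanned by classes $[s\omega']$ with $s\in F_l\mathcal{O}(\star Z)_o$'' works, but note that this spanning statement is precisely the \emph{strictness} of $r$ as a surjective morphism of Hodge $D$-modules, not merely its compatibility with filtrations. The paper uses strictness more directly: from $r(F_l\mathcal{O}(\star Z)_o)=\sum_{i+j\le l} G_i\delta_o\otimes F_jH'$ one simply observes that the $\mathcal{D}_o$-submodule this generates equals $\mathcal{D}_o(G_0\delta_o\otimes F_lH')=L(\Omega_o^n,F_lH')_o$, since $\mathcal{D}_o$ acts only on the $\delta_o$ factor and $G_0\delta_o$ generates $\delta_o$. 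This bypasses Lemma~\ref{lmm: surjectivity} and the fiddly passage from ``spanning'' to ``membership'' that you worried about.
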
 

\begin{proof}
Since $r$ is a surjective morphism of Hodge $D_o$-modules by Theorem \ref{thm: r}, we have $r(F_l \mathcal{O}(\star Z)_o)= \Sigma_{i+j\leq l} G_i\delta_o\otimes F_jH',$ where $G$ is the good filtration on $\delta_o$ induced by the standard generator of $\delta_o$ and the usual good filtration of $\mathcal{D}_o,$ see e.g. \cite[1.5.3]{MR2567401}. But the submodules $\mathcal{D}_o(\Sigma_{i+j\leq l} G_i\delta_o\otimes F_jH')$ and $\mathcal{D}_o(G_0\otimes F_lH')=L(\Omega_o^n, F_lH')_o$ are equal. Indeed the $\mathcal{D}_o$-module structure on $L(\Omega_o^n, H')_o\simeq \delta_0\otimes H'$  is such that $\mathcal{D}_o$ acts only on the left factor $\delta_0,$ which is generated by $G_0\delta_0.$ The equality follows. Therefore $r(\mathcal{D}_oF_l \mathcal{O}(\star Z)_o)=\mathcal{D}_or(F_l \mathcal{O}(\star Z)_o)=\mathcal{D}_o(\Sigma_{i+j\leq l} G_i\delta_o\otimes F_jH')= L(\Omega_o^n, F_lH')_o,$ as stated.\end{proof}

\begin{rmk} \label{rmk: Vilonen} We draw the reader's attention to the fact that Theorem 2.2.4 can be thought of as providing a new proof of Vilonen's theorem. Namely, we know that $r$ is surjective. So if we accept the elementary fact that $\mathcal{O}(\star Z)_o$ is of length $1 + \mathrm{dim}H'$ (\cite[5.7.17]{MR1232191}), we must have that the kernel of $r$ is $\mathcal{L}_o.$ But the kernel of $r$ exactly matches the description in Vilonen's theorem.\end{rmk}

We now consider $D_o$-submodules generated by an $\mathcal{O}_o$-submodule of $\mathcal{O}(\star Z)_o.$ For $M$ an $\mathcal{O}_o$-submodule of $\mathcal{O}(\star Z)_o,$ we set $\Omega_o^n(M)=M\otimes_{\mathcal{O}_o} \Omega_o^n$ and let $[\Omega_o^n(M)]\subseteq DR^n(\mathcal{O}(\star Z)_o)$ be the vector subspace of the corresponding classes of forms, namely the classes that can be represented as $mw',$ for some $m\in M$ and $w'\in \Omega_o^n.$   

\begin{thm} \label{thm: M}
	Let $M$ be an $\mathcal{O}_o$-submodule of $\mathcal{O}(\star Z)_o$  
	and let $D_oM$ be the $D_o$-submodule of $\mathcal{O}(\star Z)_o$ generated by $M.$ Assume that $\mathcal{O}(\star Z)_o$ and $D_oM$ agree generically on $Z.$ Then the quotient $\frac{D_oM}{\mathcal{L}_o}$ is isomorphic to $L(\Omega_o^n, [\Omega_o^n(M)])_o.$ Moreover we have $$D_oM= \{s\in \mathcal{O}(\star Z)_o|\text{ For all }\omega'\in \Omega_o^n, s\omega'\in M\otimes_{\mathcal{O}_o} \Omega_o^n + d(\Omega_o^{n-1}(\star Z))\}.$$	
	\end{thm}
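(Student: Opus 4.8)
The plan is to exploit the morphism $r\colon \mathcal{O}(\star Z)_o\to L(\Omega_o^n,H')_o$ from Theorem \ref{thm: r}, which has kernel $\mathcal{L}_o$, and to compute $r(D_oM)$ explicitly. First I would observe that since $\mathcal{L}_o\subseteq D_oM$ (this uses the hypothesis that $D_oM$ and $\mathcal{O}(\star Z)_o$ agree generically on $Z$, so that their quotient is supported at $o$, together with the fact that $\mathcal{L}_o$ is the smallest submodule with that property — indeed $\mathcal{L}_o$ is the kernel of $r$ and $\mathcal{O}(\star Z)_o/\mathcal{L}_o$ has no submodule not supported at $o$), the quotient $\frac{D_oM}{\mathcal{L}_o}$ is identified via $r$ with the $D_o$-submodule $r(D_oM)$ of $L(\Omega_o^n,H')_o\simeq \delta_o\otimes H'$. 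So the first reduction is: it suffices to show $r(D_oM)=L(\Omega_o^n,[\Omega_o^n(M)])_o$.

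For that equality I would argue as in the proof of Corollary \ref{cor: Hodge}. By $\mathcal{O}_o$-linearity of $r$, for $m\in M$ and $\omega'\in\Omega_o^n$ the element $r(m)$ sends $\omega'\mapsto [m\omega']$, so $r(M)$ is the set of linear maps $\Omega_o^n\to H'$ whose image lies in $[\Omega_o^n(M)]$ — more precisely, $r(M)$ consists of those $\lambda$ factoring as evaluation against a fixed $m\in M$. Then $D_o r(M)=r(D_oM)$, and since the $D_o$-action on $\delta_o\otimes H'$ acts only on the $\delta_o$ factor (generated in degree $0$), the submodule generated by $r(M)$ is exactly $L(\Omega_o^n,W)_o$ where $W=[\Omega_o^n(M)]\subseteq H'$ is the $\mathbb{C}$-span of all the images; that $W$ is precisely $[\Omega_o^n(M)]$ by definition. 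Here I would need the hypothesis that $D_oM$ and $\mathcal{O}(\star Z)_o$ agree generically on $Z$ to ensure $r(D_oM)$ is of the form $L(\Omega_o^n,W)_o$ for the \emph{full} relevant $W$ and that no information is lost — concretely, to guarantee $r(D_oM)$ really is all of $L(\Omega_o^n,[\Omega_o^n(M)])_o$ and that applying $r$ to $D_oM$ doesn't collapse it; Lemma \ref{lmm: surjectivity} applied inside the subspace $[\Omega_o^n(M)]$ is the right tool. This gives the first assertion.

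For the second assertion, the displayed description of $D_oM$, I would set $$W':=\{s\in\mathcal{O}(\star Z)_o\mid \text{for all }\omega'\in\Omega_o^n,\ s\omega'\in M\otimes_{\mathcal{O}_o}\Omega_o^n+d(\Omega_o^{n-1}(\star Z))\}$$ and show $W'=D_oM$. The inclusion $D_oM\subseteq W'$: for $s\in D_oM$, $r(s)\in L(\Omega_o^n,[\Omega_o^n(M)])_o$, so $[s\omega']\in[\Omega_o^n(M)]$ for all $\omega'$, meaning $s\omega'$ differs from an element of $\Omega_o^n(M)$ by an exact form — this is exactly membership in $W'$ (one has to be a little careful translating "class lies in $[\Omega_o^n(M)]$" into "$s\omega'$ equals an element of $M\otimes\Omega_o^n$ plus an exact form", but this is the definition of $[\Omega_o^n(M)]$ as the image of $\Omega_o^n(M)$ in cohomology). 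Conversely, $W'$ is a $D_o$-submodule containing $M$ — the $D_o$-stability is checked exactly as the analogous computation in the proof of Theorem \ref{thm: r}, using that $\partial_i(s)g\omega = d(sg\omega^{(i)}) - s\partial_i(g)\omega$ and that $d(\text{anything})$ and $M\otimes\Omega_o^n$ are both stable in the appropriate sense (here one uses that $\partial_i$ of an element of $M\otimes\Omega_o^n$, modulo exact forms, stays in $M\otimes\Omega_o^n$ modulo exact forms, which again follows from the Leibniz identity since $M$ is an $\mathcal{O}_o$-module). Hence $D_oM\subseteq W'$, and combined with the image computation $r(W')\subseteq L(\Omega_o^n,[\Omega_o^n(M)])_o=r(D_oM)$ together with $\mathcal{L}_o=\ker r\subseteq D_oM$, we get $W'\subseteq D_oM+\mathcal{L}_o=D_oM$, so $W'=D_oM$.

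The main obstacle I expect is the first assertion's use of the "agree generically on $Z$" hypothesis: one must show that $r(D_oM)$ is not some proper $D_o$-submodule of $L(\Omega_o^n,[\Omega_o^n(M)])_o$. The subtlety is that $D_oM$ could a priori fail to surject onto the full $L(\Omega_o^n,[\Omega_o^n(M)])_o$ if $M$ were too small; the generic-agreement hypothesis is what forces $D_oM$ to be large enough that $[\Omega_o^n(D_oM)]=[\Omega_o^n(M)]$, i.e. passing to the generated $D_o$-module does not enlarge the span of classes. I would verify this by noting that a form $s\omega'$ with $s\in D_oM$, written via the Leibniz computation as $\partial^\alpha$ applied to elements of $M$ times forms, is cohomologous to a combination of elements of $\Omega_o^n(M)$ — so $[\Omega_o^n(D_oM)]=[\Omega_o^n(M)]$ unconditionally — and then Lemma \ref{lmm: surjectivity}, applied to the $D_o$-module $D_oM/\mathcal{L}_o\hookrightarrow L(\Omega_o^n,[\Omega_o^n(M)])_o$ with $V=[\Omega_o^n(M)]$, upgrades this to the full equality, the generic-agreement hypothesis being exactly what ensures every $v\in[\Omega_o^n(M)]$ is hit by the image of some element.
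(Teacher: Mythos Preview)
Your proposal is correct and follows essentially the same route as the paper: show $\mathcal{L}_o\subseteq D_oM$ via the generic-agreement hypothesis, then identify $r(D_oM)$ with $L(\Omega_o^n,[\Omega_o^n(M)])_o$ using Lemma~\ref{lmm: surjectivity}; the displayed description of $D_oM$ then drops out as $r^{-1}(L(\Omega_o^n,[\Omega_o^n(M)])_o)$, which you spell out in more detail than the paper does.

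One small correction: you mislocate the role of the generic-agreement hypothesis in your final paragraph. It is used \emph{only} to get $\mathcal{L}_o\subseteq D_oM$ (so that $r$ restricted to $D_oM$ has kernel exactly $\mathcal{L}_o$). The verification that every $v\in[\Omega_o^n(M)]$ lies in $\mathrm{Im}(\lambda_v)$ for some $\lambda_v\in r(M)$ is unconditional: by definition $v=[m\omega']$ for some $m\in M$, $\omega'\in\Omega_o^n$, and then $\lambda_v:=r(m)$ works since $r(m)(\omega')=v$. So Lemma~\ref{lmm: surjectivity} applies to $N=r(D_oM)$ without any further hypothesis, and your worry that ``$D_oM$ could a priori fail to surject'' is unfounded.
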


\begin{proof} Since $\mathcal{L}_o$ is the minimal extension and $D_oM$ extends $\mathcal{O}(\star Z)_o$ generically on $Z,  D_oM$ contains $\mathcal{L}_o.$ It thus makes sense to consider the quotient $\frac{D_oM}{\mathcal{L}_o}.$ We claim that $r(D_oM)= L(\Omega_o^n, [\Omega_o^n(M)])_o,$ where $r$ is the morphism from Theorem \ref{thm: r}. Indeed, we have 

$M\subseteq r^{-1}(L(\Omega_o^n, [\Omega_o^n(M)])_o).$ Therefore $D_oM \subseteq r^{-1}(L(\Omega_o^n, [\Omega_o^n(M)])_o),$ because 

$r^{-1}(L(\Omega_o^n, [\Omega_o^n(M)])_o)$ is a $D_o$-module containing $M,$ and hence $r(D_oM) \subseteq L(\Omega_o^n, [\Omega_o^n(M)])_o.$ We then note that the equality $r(D_oM) = L(\Omega_o^n, [\Omega_o^n(M)])_o$ follows immediately from Lemma \ref{lmm: surjectivity}.\end{proof}

\section{Some corollaries}\label{section: cor}

In what follows, let us consider the filtration $P$ by order of the pole on the de Rham cohomology $DR^n(\mathcal{O}(\star Z)_o)\simeq H'.$ Namely, we let $P_l H'$ be the subspace of the classes that can be represented, via the isomorphism above, by forms having a pole of order at most $l+1$ along $Z.$  

\begin{cor}\label{cor: P} Let $f$ be a local equation of $Z,$ and let $l\geq0.$ We have the following description of $D_o\frac{1}{f^{l+1}},$ the $D_o$-submodule of $\mathcal{O}(\star Z)_o$ generated by $\frac{1}{f^{l+1}}:$ $$D_o\frac{1}{f^{l+1}}= \{s\in \mathcal{O}(\star Z)_o|\text{ For all }\omega'\in \Omega_o^n, s\omega'\in \Omega^n_o((l+1)Z) + d(\Omega^{n-1}_o(\star Z))\}.$$ It follows that the $D_o$-module length of the quotient $\frac{D_o\frac{1}{f^{l+1}}}{\mathcal{L}_o}$ is $\mathrm{dim}_{\mathbb{C}} P_l H'.$ \end{cor}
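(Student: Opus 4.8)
The plan is to apply Theorem \ref{thm: M} with the specific choice $M = \mathcal{O}_o \cdot \frac{1}{f^{l+1}}$, the cyclic $\mathcal{O}_o$-module generated by $\frac{1}{f^{l+1}}$. First I would check the hypothesis of Theorem \ref{thm: M}, namely that $\mathcal{O}(\star Z)_o$ and $D_o M = D_o\frac{1}{f^{l+1}}$ agree generically on $Z$. This is standard: away from the singular point $o$, the hypersurface $Z$ is smooth, and on the smooth locus $D\frac{1}{f}$ already generates all meromorphic functions with poles along $Z$ (the Bernstein--Sato polynomial of a smooth divisor is $b(s)=s+1$, so applying differential operators to $\frac{1}{f^{l+1}}$ produces $\frac{1}{f^{m}}$ for all $m\geq 1$ locally). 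Hence the generic agreement holds. Then Theorem \ref{thm: M} immediately gives both the displayed description of $D_o\frac{1}{f^{l+1}}$ and the isomorphism $\frac{D_o\frac{1}{f^{l+1}}}{\mathcal{L}_o} \simeq L(\Omega_o^n, [\Omega_o^n(M)])_o$.

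Next I would identify the two pieces of data that convert the general statement into the pole-order statement. For the displayed formula: since $M = \mathcal{O}_o \cdot \frac{1}{f^{l+1}}$, we have $M \otimes_{\mathcal{O}_o} \Omega_o^n = \frac{1}{f^{l+1}}\Omega_o^n = \Omega_o^n((l+1)Z)$ by definition of the twisted sheaf, so the condition ``$s\omega' \in M\otimes_{\mathcal{O}_o}\Omega_o^n + d(\Omega_o^{n-1}(\star Z))$'' in Theorem \ref{thm: M} is verbatim the condition ``$s\omega' \in \Omega_o^n((l+1)Z) + d(\Omega_o^{n-1}(\star Z))$'' in the corollary. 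For the length statement: the subspace $[\Omega_o^n(M)] \subseteq DR^n(\mathcal{O}(\star Z)_o) = H'$ consists of classes representable by a form $m\omega'$ with $m \in \frac{1}{f^{l+1}}\mathcal{O}_o$ and $\omega' \in \Omega_o^n$, i.e. by a top form with pole of order at most $l+1$ along $Z$ --- which is precisely the definition of $P_l H'$ given just before the corollary. Thus $[\Omega_o^n(M)] = P_l H'$.

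Finally I would conclude the length computation. By Lemma \ref{lmm: delta}, $L(\Omega_o^n, P_lH')_o \simeq \delta_o \otimes_\mathbb{C} P_l H'$, and since $\delta_o$ has length $1$ as a $D_o$-module (it is irreducible), the $D_o$-module $\delta_o \otimes_\mathbb{C} P_l H'$ has length equal to $\dim_\mathbb{C} P_l H'$ (it is a direct sum of $\dim_\mathbb{C} P_l H'$ copies of $\delta_o$). Combining with the isomorphism $\frac{D_o\frac{1}{f^{l+1}}}{\mathcal{L}_o} \simeq L(\Omega_o^n, P_l H')_o$ from the first paragraph gives that the length of $\frac{D_o\frac{1}{f^{l+1}}}{\mathcal{L}_o}$ is $\dim_\mathbb{C} P_l H'$, as claimed.

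The only genuine obstacle I foresee is the verification of the generic-agreement hypothesis of Theorem \ref{thm: M}; everything else is unwinding definitions. That verification is nonetheless routine, relying only on the well-known structure of $D$-modules along a smooth divisor, so I would dispatch it in one or two sentences. One should also take momentary care that the pole-order indexing conventions match: ``$P_l H'$'' is defined via forms of pole order at most $l+1$, and the generator $\frac{1}{f^{l+1}}$ indeed produces exactly such forms when multiplied by holomorphic top forms, so the indices are consistent throughout.
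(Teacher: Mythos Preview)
Your proposal is correct and follows essentially the same route as the paper: apply Theorem~\ref{thm: M} with $M=\mathcal{O}_o\cdot\frac{1}{f^{l+1}}$, identify $[\Omega_o^n(M)]=P_lH'$, and then use Lemma~\ref{lmm: delta} to read off the length. The only difference is that you spell out the verification of the generic-agreement hypothesis of Theorem~\ref{thm: M}, which the paper leaves implicit.
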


\begin{proof} Apply Theorem \ref{thm: M} to $M= \frac{\mathcal{O}_o}{f^{l+1}}.$ It follows that the quotient $\frac{D_o \frac{1}{f^{l+1}}}{\mathcal{L}_o}= \frac{D_o \frac{\mathcal{O}_o}{f^{l+1}}}{\mathcal{L}_o}$ is isomorphic to $L(\Omega_o^n, [\Omega_o^n( \frac{\mathcal{O}_o}{f^{l+1}})])_o= L(\Omega_o^n, P_lH')_o.$ Since the latter is isomorphic to $\delta_o\otimes_{\mathbb{C}}P_lH'$ by Lemma \ref{lmm: delta}, the length assertion is proved. \end{proof}

Therefore we deduce the following properties, first proved in \cite[Theorems 1.1 and 1.3]{10.1093/imrn/rnac369}, from those of the pole order filtration.

\begin{cor} Recall the hypotheses \ref{setup}. 

\begin{enumerate}

\item The $D_o$-module length of the quotient $\frac{D_o\frac{1}{f^{l+1}}}{\mathcal{L}_o}$ is at least $\mathrm{dim}_{\mathbb{C}} F_l H'.$

\item If $Z$ is quasi-homogeneous, then the inequality from 1 is an equality.

\end{enumerate}

\end{cor}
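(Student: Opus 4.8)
The plan is to deduce both parts of the corollary from Corollary \ref{cor: P} together with standard comparisons between the pole order filtration $P_\bullet H'$ and the Hodge filtration $F_\bullet H'$ on the de Rham cohomology $H' \simeq DR^n(\mathcal{O}(\star Z)_o)$. By Corollary \ref{cor: P}, the $D_o$-module length of $\frac{D_o \frac{1}{f^{l+1}}}{\mathcal{L}_o}$ equals $\dim_{\mathbb{C}} P_l H'$, so everything reduces to comparing $\dim P_l H'$ with $\dim F_l H'$.

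For part (1), the key input is that the pole order filtration always contains (i.e. is coarser than, in the sense of being larger) the Hodge filtration, namely $F_l H' \subseteq P_l H'$ for all $l \geq 0$. This is the well-known fact alluded to in the introduction — see e.g. \cite[5.4 ii]{10.1007/BFb0086378} and \cite[(b) of Theorem 0.3]{CM_1991} — which in this isolated-singularity setting follows from the identification of $H'$ with $\frac{H}{NH}$ (the Remark after Theorem \ref{thm: r}) and Saito's comparison between the Hodge and pole order filtrations on the cohomology of the Milnor fibre. Granting $F_l H' \subseteq P_l H'$, we get $\dim F_l H' \leq \dim P_l H'$, and combined with Corollary \ref{cor: P} this yields the length inequality claimed in (1). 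I would state this comparison carefully, citing the precise references, since it is the one nontrivial external ingredient.

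For part (2), when $Z$ is quasi-homogeneous I would invoke the classical result that for a quasi-homogeneous isolated hypersurface singularity the pole order filtration on $H'$ coincides with the Hodge filtration: $P_l H' = F_l H'$ for all $l$. This goes back to the explicit description of the mixed Hodge structure on the Milnor fibre cohomology in the quasi-homogeneous case (the Milnor algebra / Brieskorn lattice computation; see the work of Steenbrink and Saito, e.g. again \cite{10.1007/BFb0086378}), where the weighted-homogeneity forces the pole order of a representing top-form to agree with its Hodge level. With $P_l H' = F_l H'$ in hand, Corollary \ref{cor: P} immediately upgrades the inequality of (1) to the equality $\mathrm{length}\,\frac{D_o\frac{1}{f^{l+1}}}{\mathcal{L}_o} = \dim_{\mathbb{C}} F_l H'$.

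The main obstacle is not in the $D$-module bookkeeping — that is entirely handled by Corollary \ref{cor: P} — but in pinning down and correctly citing the two statements about filtrations on $H'$: the general inclusion $F_\bullet \subseteq P_\bullet$ and its collapse to an equality in the quasi-homogeneous case. The subtlety is that $H'$ is the $n$-th de Rham cohomology of the germ of the complement, which here one must identify with $\frac{H}{NH}$ for $H = H^{n-1}(Z_\infty)_1$ before the classical Milnor-fibre comparison theorems apply; one should check that both filtrations are compatible with the surjection $H \twoheadrightarrow \frac{H}{NH}$ of mixed Hodge structures so that the comparison descends. Once that compatibility is noted, the proof is a short two-line deduction from Corollary \ref{cor: P}.
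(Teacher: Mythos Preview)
Your proposal is correct and follows essentially the same approach as the paper: reduce via Corollary \ref{cor: P} to comparing $\dim P_l H'$ with $\dim F_l H'$, then invoke the general inclusion $F_\bullet \subseteq P_\bullet$ (\cite[Theorem (b)]{CM_1991}) for part (1) and its collapse to equality in the quasi-homogeneous case (\cite[Theorem (a)]{CM_1991}) for part (2). Your extra care about descending the filtration comparison along $H \twoheadrightarrow H/NH$ is more than the paper spells out, but the underlying argument is the same two-line deduction.
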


\begin{proof} Since the $D_o$-module length of the quotient $\frac{D_o\frac{1}{f^{l+1}}}{\mathcal{L}_o}$ is $\mathrm{dim}_{\mathbb{C}} P_l H'$ by the corollary \ref{cor: P}, the first assertion follows from \cite[Theorem (b)]{CM_1991} and the second from \cite[Theorem (a)]{CM_1991}.\end{proof} 

\begin{rmk} We note that, conversely, any result on the length of $\frac{D_o\frac{1}{f^{l+1}}}{\mathcal{L}_o}$ transfers by Corollary \ref{cor: P} to a statement about the pole order filtration. For example, \cite[Theorem 1]{saito2022length} describes those lengths in terms of the Gauss-Manin connection (compare with \cite[Theorem (c)]{CM_1991}). Moreover, \cite[\S 5]{10.1093/imrn/rnac369} and \cite[3.2 Example I]{saito2022length} provide new examples where the Hodge filtration is strictly contained in the pole order filtration. 
\end{rmk}

We also obtain results for algebraic $D$-modules. Let $n\geq3$ and let $g$ be a complex polynomial in $n$ variables defining a reduced irreducible hypersurface $Y$ with an isolated singularity at the origin, i.e. $|Y^{sing}|=1.$ Then for all $l\geq0,$ we denote $D^{\mathrm{alg}}\frac{1}{g^{l+1}}$ the left $D^{\mathrm{alg}}$-submodule of $R[\frac{1}{g}]$ generated by $\frac{1}{g^{l+1}},$ where $R$ is the ring of complex polynomials in $n$ variables and $D^{\mathrm{alg}}$ is the $n$-th Weyl algebra $A_n(\mathbb{C}).$ We let $IC$ be the $D^{\mathrm{alg}}$-module pre-image in $R[\frac{1}{g}]$ of the intersection cohomology $D^{\mathrm{alg}}$-module $IC_Y.$

\begin{cor} \label{cor: A} The quotient $D^{\mathrm{alg}}$-module $\frac{D^{\mathrm{alg}}\frac{1}{g^{l+1}}}{IC}$ is of length $\mathrm{dim}_{\mathbb{C}} P_l H^{n}_{dR}(B\setminus{Y}),$ where $P_l$ is the pole order filtration of the De Rham cohomology of the complement of $Y$ in a small analytic ball $B$ centred at the origin.  
\end{cor}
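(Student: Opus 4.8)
The plan is to reduce the algebraic statement to the analytic one (Corollary~\ref{cor: P}) by a GAGA/local-to-global comparison argument. First I would set $o$ to be the origin, let $X$ be a small open ball $B$ in $\mathbb{C}^n$ on which $g$ defines $Z:=Y\cap B$ with its unique singularity at $o$, and let $f$ be the restriction of $g$ to $B$. The first step is to show that analytification gives an isomorphism between the quotient of the algebraic $D^{\mathrm{alg}}$-module structure and the quotient of the analytic $D_o$-module structure: concretely, that the stalk at $o$ of the analytification of $D^{\mathrm{alg}}\frac{1}{g^{l+1}}$ is $D_o\frac{1}{f^{l+1}}$, and similarly that the stalk at $o$ of the analytification of $IC_Y$ matches $\mathcal{L}_Z$. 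For the latter I would invoke that the intersection cohomology $D$-module commutes with analytification (it is the minimal extension of the trivial connection on the complement, and minimal extension is compatible with the analytic functor), so that the analytification of $IC$ is $\mathcal{L}$ near $o$.

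The second step is the key length-preservation claim: the algebraic quotient $\frac{D^{\mathrm{alg}}\frac{1}{g^{l+1}}}{IC}$ has the same length as the analytic quotient $\frac{D_o\frac{1}{f^{l+1}}}{\mathcal{L}_o}$. Both quotients are supported set-theoretically at the single point $o$ (for the algebraic side this is because $\frac{R[1/g]}{IC}$ is supported on $Y^{\mathrm{sing}}=\{o\}$ by Vilonen/Kashiwara, and $D^{\mathrm{alg}}\frac{1}{g^{l+1}}$ sits between $IC$ and $R[1/g]$; for the analytic side it is Theorem~\ref{thm: r}). A holonomic $D^{\mathrm{alg}}$-module supported at a point is a finite sum of copies of the algebraic delta module $\delta_o^{\mathrm{alg}}$, and its length is $\dim_{\mathbb{C}}$ of its fibre; analytification takes $\delta_o^{\mathrm{alg}}$ to $\delta_o$ and preserves the fibre, hence preserves length for point-supported modules. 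So it suffices to know that analytification of the inclusion $IC\subseteq D^{\mathrm{alg}}\frac{1}{g^{l+1}}$ of $D^{\mathrm{alg}}$-modules, followed by taking stalks at $o$, yields exactly the inclusion $\mathcal{L}_o\subseteq D_o\frac{1}{f^{l+1}}$ — which is Step~1 — and that the resulting quotient, being point-supported, has length computed by its fibre dimension on either side.

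The third step is purely a bookkeeping reduction: by Corollary~\ref{cor: P}, the length of $\frac{D_o\frac{1}{f^{l+1}}}{\mathcal{L}_o}$ is $\dim_{\mathbb{C}} P_l H'$, where $H'=DR^n(\mathcal{O}(\star Z)_o)$; and one identifies $DR^n(\mathcal{O}(\star Z)_o)$ with $H^n_{dR}(B\setminus Y)$ together with its pole order filtration. Concretely, $DR^n(\mathcal{O}(\star Z))$ is the top algebraic de Rham cohomology sheaf of the complement, its stalk at $o$ computes $H^n_{dR}((B\setminus Y))$ for $B$ a sufficiently small ball (by the local structure of de Rham cohomology of the Milnor fibre complement / Grothendieck's comparison with the analytic de Rham complex), and the pole order filtration $P_l$ on classes represented by forms with pole order $\le l+1$ is manifestly the same on both descriptions. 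Then $\dim_{\mathbb{C}} P_l H' = \dim_{\mathbb{C}} P_l H^n_{dR}(B\setminus Y)$, which is the asserted formula.

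The main obstacle I expect is Step~1, the compatibility of the intersection cohomology $D$-module with analytification at the singular point — one must be careful that $IC_Y$, defined algebraically as the minimal extension, analytifies to the analytic minimal extension $\mathcal{L}_Z$ locally at $o$; this uses that analytification is exact and faithful on holonomic $D$-modules and commutes with the intermediate-extension functor, together with the fact that both $IC_Y$ and $\mathcal{L}_Z$ restrict to the trivial connection $\mathcal{O}$ on the smooth locus. The rest, including the point-support length argument and the de Rham comparison, is standard. One should also note, for the length statement to make literal sense, that $D^{\mathrm{alg}}\frac{1}{g^{l+1}}$ indeed contains $IC$: this holds because both agree with $R[1/g]$ on the smooth locus of $Y$ and $IC$ is the minimal such extension, exactly as in the proof of Theorem~\ref{thm: M}.
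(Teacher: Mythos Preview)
Your proof is correct and follows essentially the same route as the paper: analytify, match $IC$ with $\mathcal{L}$ via minimality, use point-support to pass to the stalk at $o$, identify $H'$ with $H^n_{dR}(B\setminus Y)$ compatibly with the pole order filtration, and apply Corollary~\ref{cor: P}. The only cosmetic difference is that the paper invokes directly that analytification is an equivalence on regular holonomic modules (citing \cite[Proposition 7.8]{MR864073}) to get length preservation and the $IC\leftrightarrow\mathcal{L}$ correspondence in one stroke, whereas you argue these points more concretely through the delta-module structure at the point and compatibility of minimal extension with analytification.
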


\begin{proof} Using the notation of \ref{setup}, the analytification functor $D_{\mathbb{C}^n}\otimes_{D^{\mathrm{alg}}}-$ is an equivalence between the category of regular holonomic $D^{\mathrm{alg}}$-modules and a full subcategory of regular $D_{\mathbb{C}^n}$-modules \cite[Proposition 7.8]{MR864073}. It follows directly from the definition that the analytification of the regular holonomic $D^{\mathrm{alg}}$-module $R[1/g]$ is the sheaf of meromorphic functions $\mathcal{O}(\star Y^{\mathrm{an}}),$ and the analytification of   
	$D^{\mathrm{alg}}\frac{1}{g^{l+1}}$ is $D_{\mathbb{C}^n}\frac{1}{g^{l+1}}.$ Moreover, because the analytification is an equivalence, the minimality of $IC$ and $\mathcal{L}$ force them to correspond to each other under the analytification functor. But, as $\frac{D_{\mathbb{C}^n}\frac{1}{g^{l+1}}}{\mathcal{L}}$ is supported at the origin, its length is the same as that of its stalk at the origin. But the natural map from $H^{n}_{dR}(B\setminus{Y})$  to $H’= DR^n(\mathcal{O}(\star Y^{\mathrm{an}}))_o$ is an isomorphism for a small enough analytical ball $B$ around $o,$ and it is compatible with the pole order filtration. Therefore the assertion follows from Corollary \ref{cor: P}. \end{proof}
	
\begin{rmk} While the corollary is presented with the constraint $|Y^{sing}|=1$ for clarity, the assertion can be extended to polynomials $g$ with multiple isolated singularities. This would involve introducing a summation over all singularities.\end{rmk}
	
\bibliography{bibfilex}
\bibliographystyle{plain}

\end{document}